\documentclass[11pt]{article}
\usepackage{mathrsfs}
\usepackage{amsmath}
\usepackage{amssymb}
\usepackage{graphicx}
\usepackage{epic}
\usepackage{color}

\usepackage{pst-poly}  
\usepackage{pst-plot}  
\usepackage{pst-poly}  

\renewcommand{\paragraph}{\roman{paragraph}}

\usepackage{bm}
\usepackage{hyperref}
\usepackage{tikz}
\usetikzlibrary{arrows,shapes,positioning}
\usetikzlibrary{decorations.markings}
\tikzstyle arrowstyle=[scale=1]
\tikzstyle directed=[postaction={decorate,decoration={markings, mark=at position .65 with {\arrow[arrowstyle]{stealth}}}}]
\tikzstyle reverse directed=[postaction={decorate,decoration={markings, mark=at position .65 with {\arrowreversed[arrowstyle]{stealth};}}}]

\newtheorem{theorem}{Theorem}[section]
\newtheorem{corollary}[theorem]{Corollary}

\newtheorem{problem}[theorem]{Problem}
\newtheorem{lemma}[theorem]{Lemma}
\newtheorem{example}[theorem]{Example}

\newenvironment{proof}{\noindent {\bf Proof.}}{\rule{3mm}{3mm}\par\medskip}

\begin{document}

\title{Changes in the Seidel energy of blow-up graphs under edge deletion \thanks{Supported by The National Natural Science Foundation of China (No. 12571360, 12331012), Excellent University Research and Innovation Team in Anhui Province (No. 2024AH010002, 2025AHGXZK10041)}}

\author{Yayang Liu, Yi Wang\thanks{Corresponding author: wangy@ahu.edu.cn} \\
School of Mathematical Sciences, Anhui University, Hefei 230601,\\ Anhui, China}

\date{}
\maketitle

\begin{abstract}
Let $S(G)$ denote the Seidel matrix of a simple graph $G$, and let 
$E_S(G)$ be the Seidel energy of $G$, defined as the sum of the 
absolute values of the eigenvalues of $S(G)$. In this paper, we study 
the change of Seidel energy under edge deletion. 
For an independent-set blow-up graph 
$G=H[n_1,\ldots,n_p]$, we establish a general structural criterion within the framework of independent-set blow-up graphs. 
More precisely, if the endpoints of the deleted edge $e$ belong to 
blow-up parts of sizes $n_a$ and $n_b$, respectively, then $E_S(G-e)>E_S(G)$ whenever both $n_a,n_b$ are at least $4$, or one is $3$ and the other is at least $6$, or one is $2$ and the other is at least $15$. 

As applications, we obtain the following consequences. First, for every 
Tur\'an graph $T(n,r)$ with $r\geq4$ and $n\geq4r$, deleting any edge 
strictly increases the Seidel energy. Second, for complete multipartite 
graphs, we derive an exact reduced-order spectral criterion for the 
remaining cases not covered by the structural result. This criterion 
determines whether the Seidel energy increases, decreases, or remains 
unchanged after deleting an edge, by using matrices whose orders depend 
only on the number of partite sets. These results provide affirmative answers to two problems proposed by 
Tian et al. 
[\textit{Linear and Multilinear Algebra} 70 (19) (2022), 4597--4614].

\end{abstract}

{\bf Keywords:} Seidel energy; edge deletion; independent-set blow-up; complete multipartite graph; Tur\'an graph

{\bf MSC:} 05C50; 05C76; 15A18; 15A60

\section{Introduction}
Throughout this paper, all graphs are simple and finite. Let $G$ be a graph of order $n$, with vertex set $V(G)$ and edge set $E(G)$, and write $e = uv$ for an edge joining distinct vertices $u,v \in V(G)$. A complete $r$-partite graph $K_{p_1, p_2, \dots, p_r}$ is a simple graph whose vertex set can be partitioned into $r$ nonempty parts of sizes $p_1, p_2, \dots, p_r$, respectively, such that two vertices are adjacent if and only if they belong to different parts. A complete $r$-partite graph of order $n$ is called the $r$-partite Tur\'an graph, denoted by $T(n,r)$, if the size of each part is either $\left\lfloor \frac{n}{r} \right\rfloor$ or $\left\lceil \frac{n}{r} \right\rceil$. The adjacency matrix $A(G)$ of $G$ is an $n \times n$ matrix with entries $A(G)_{uv} = 1$ if $uv \in E(G)$ and $0$ otherwise. The Seidel matrix $S(G)$ of $G$ is defined as $S(G) = J - I - 2A(G)$, where $J$ is the all-ones matrix and $I$ is the identity matrix. Equivalently, the diagonal entries of $S(G)$ are $0$, and for $u \neq v$, the $(u,v)$-entry is $-1$ if $uv \in E(G)$ and $1$ otherwise. The eigenvalues of $S(G)$, denoted by $\lambda_1(S(G)) \geq \lambda_2(S(G)) \geq \cdots \geq \lambda_n(S(G))$, are called the Seidel eigenvalues of $G$; they are all real because $S(G)$ is real symmetric. The Seidel energy of $G$ is defined as $E_S(G) = \sum_{i=1}^n |\lambda_i(S(G))|$.

Seidel matrices first appeared in \cite{VS} in the study of equiangular line systems in Euclidean spaces. Haemers \cite{H} introduced the Seidel energy of a graph and conjectured that for every graph $G$ of order $n$, $E_S(G)\ge 2n-2$, which was verified for $n\le 12$ in the same paper. Ghorbani \cite{G} proved it for all graphs $G$ of order $n$ satisfying $|\det S(G)|\ge n-1$. Rizzolo \cite{R} studied determinants of Seidel matrices in connection with this conjecture. Oboudi \cite{O1} proved the conjecture for every $k$-regular graph $G$ of order $n$ with $k\ne \frac{n-1}{2}$ and without any eigenvalue in the interval $(-1,0)$. The conjecture was finally solved by Akbari et al. \cite{AEKN}. Subsequently, Einollahzadeh et al. \cite{EN2} gave a short matrix proof. Further results on Seidel matrices and Seidel energy can be found in \cite{AAD,BH,EN1,GS,O2}.

In recent years, changes in graph energy under edge modifications have attracted considerable attention. Related problems have been investigated for adjacency energy, distance energy, positive $p$-energy, and other types of graph energies. For more details about the background and development, see \cite{AGO,DS1,DS2,LSG,S,SP,SD,TLW,TLC1,VSV} and the references therein. In this paper, we focus on the change of the Seidel energy of a graph under edge deletion. In 2022, Tian et al. \cite{TLC2} observed that the complete graph $K_n$, the empty graph $nK_1$, and the complete bipartite graph $K_{n_1,n_2}$, where $n_1+n_2=n$, all attain the minimum possible Seidel energy among graphs of order $n$. Consequently, for $K_n$ and $K_{n_1,n_2}$, deleting an edge cannot decrease the Seidel energy. Motivated by this fact, they proposed the following problem.

\begin{problem}[\cite{TLC2}]\label{prob1.1}
For a complete $r$-partite graph $K_{p_1, p_2, \dots, p_r}$ with $r \geq 3$ and $p_i \geq 2$, how does the Seidel energy change when any edge is deleted?
\end{problem}
The examples $K_{2,2,3}$ and $K_{3,3,4}$ given by Tian et al. \cite{TLC2} show that deleting an edge may either increase or decrease the Seidel energy. This indicates that Problem~\ref{prob1.1} is more delicate than the complete and complete bipartite cases, and a complete solution seems to require a more refined method. For this reason, Tian et al. considered a special class of complete multipartite graphs, namely Tur\'an graphs. They proved that for the tripartite Tur\'an graph $T(n,3)$ with $n\ge 9$, deleting any edge always increases the Seidel energy. This observation led them to pose the following problem.

\begin{problem}[\cite{TLC2}]\label{prob1.2}
  For any edge $e$ of the Tur\'an graph $T(n, r)$ with $r \ge 4$, does there exist a sufficiently large integer $n_0$ such that $E_S(T(n, r)) < E_S(T(n, r) - e)$ whenever $n \ge n_0$?
\end{problem}

Liu and Chen \cite{LC} verified Problem~\ref{prob1.2} for the case $r=5$. The aim of this paper is to give a unified treatment of Problems \ref{prob1.1} and \ref{prob1.2}  within the framework of independent-set blow-up graphs.

If $H$ is a graph with vertex set $\{1, \ldots, p\}$ and $n_1, \ldots, n_p$ are positive integers, then the independent-set blow-up graph $H[n_1, \ldots, n_p]$ is obtained by replacing each vertex $i$ of $H$ with an independent set of size $n_i$, and replacing each edge $ij$ of $H$ by a complete bipartite graph $K_{n_i, n_j}$. We establish a structural criterion giving sufficient conditions under which deleting an edge strictly increases the Seidel energy of an
independent-set blow-up graph. As an application, we obtain an affirmative answer to Problem~\ref{prob1.2}, with $n_0=4r$. For complete multipartite graphs, we further derive an exact reduced-order spectral criterion that determines the sign of the Seidel energy change in the cases not covered by the structural criterion. Together, these results provide a unified treatment of Problems \ref{prob1.1} and \ref{prob1.2}.

The remainder of the paper is organized as follows. In Section 2, we introduce notation and present several preliminary lemmas. In Section 3, we establish the structural criterion for independent-set blow-up graphs. In Section 4, we derive the reduced-order spectral criterion for complete multipartite graphs. In Section 5, we interpret the Seidel energy as the adjacency energy of a signed complete graph and propose two open problems for further discussion.

\section{Notation and Preliminaries}
In this section, we introduce notation and present several preliminary lemmas that will be used throughout the paper.

\textbf{Notation.} For an $n \times n$ real symmetric matrix $A$, we denote its eigenvalues by $\lambda_1(A)\ge \lambda_2(A)\ge \cdots \ge \lambda_n(A)$. The nuclear norm of $A$, also called the Schatten $1$-norm, is denoted by $\|A\|_*$. Because $A$ is real symmetric, $\|A\|_* = \sum_{i=1}^n |\lambda_i(A)|$. For a vector $x$, we write $\|x\|$ for the Euclidean norm. For two matrices $A$ and $B$, the notation $A \oplus B$ means their block diagonal direct sum. We write $ A \sim B $ if $A$ and $B$ are orthogonally similar. Define $s_-(A) = \sum_{\lambda_i(A) < 0} |\lambda_i(A)|$; that is, $s_-(A)$ is the sum of the absolute values of the negative eigenvalues of $A$.

\begin{lemma}\label{lem2.1}
Let $A$ be a real symmetric matrix. Then $\|A\|_* = tr(A)+2s_-(A)$.
\end{lemma}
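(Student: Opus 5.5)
The identity follows by splitting the spectrum of $A$ into its nonnegative and negative parts. Since $A$ is real symmetric, its eigenvalues $\lambda_1(A)\ge\cdots\ge\lambda_n(A)$ are real, and as noted in the Notation paragraph, $\|A\|_*=\sum_{i=1}^n|\lambda_i(A)|$. I will write
\[
s_+(A)=\sum_{\lambda_i(A)\ge 0}\lambda_i(A),
\]
so that $\|A\|_*=s_+(A)+s_-(A)$, where $s_-(A)$ is defined as in the Notation.

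The only other ingredient is that the trace of a matrix equals the sum of its eigenvalues (counted with multiplicity). Hence
\[
tr(A)=\sum_{i=1}^n\lambda_i(A)=s_+(A)-s_-(A),
\]
because each negative eigenvalue contributes $-|\lambda_i(A)|$ to the sum.

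Eliminating $s_+(A)$ between the two displayed relations gives $s_+(A)=tr(A)+s_-(A)$, and therefore
\[
\|A\|_*=s_+(A)+s_-(A)=tr(A)+2s_-(A).
\]
There is no real obstacle. The one point to keep straight is that zero eigenvalues may be placed in either part without changing anything, since they contribute nothing to either sum. In later applications $A$ will be a Seidel matrix $S(G)$, which has zero diagonal, so the lemma will reduce to $E_S(G)=2s_-(S(G))$.
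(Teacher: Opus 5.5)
Your proof is correct and is essentially the paper's argument: split the spectrum by sign, use $\operatorname{tr}(A)=s_+(A)-s_-(A)$, and add $2s_-(A)$ to recover $\|A\|_*$. One minor note on your closing aside: the paper actually applies the lemma in Lemma~\ref{lem4.3} to the reduced matrix $B_e$, whose trace is $n-r-2$ rather than zero, so the trace term genuinely matters there.
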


\begin{proof}
Since
\[
\operatorname{tr}(A) = \sum_{\lambda_i(A) > 0} \lambda_i(A) - \sum_{\lambda_i(A) < 0} |\lambda_i(A)|,
\]
it follows that
\[
\operatorname{tr}(A) + 2s_-(A) = \sum_{\lambda_i(A) > 0} \lambda_i(A) + \sum_{\lambda_i(A) < 0} |\lambda_i(A)| = \|A\|_*.
\]
\end{proof}
The following basic properties of the nuclear norm will also be used later. While these properties may have appeared in prior literatures, we include their proofs in this section for the sake of a self-contained argument.

\begin{lemma}[\cite{HJ}, Lemma 5.1.2]\label{lem2.2}
For any two matrices $A$ and $B$ of the same size, $\|A+B\|_* \leq \|A\|_* + \|B\|_*$ and $\bigl|\|A\|_* - \|B\|_*\bigr| \leq \|A-B\|_*$. In particular, $\|A\|_* \geq \|B\|_* - \|A-B\|_*$.
\end{lemma}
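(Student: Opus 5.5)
The plan is to prove the triangle inequality first and then obtain the two remaining assertions from it. The statement is for general (possibly non-symmetric) matrices, so I will use the singular value description of the nuclear norm, $\|A\|_*=\sum_i \sigma_i(A)$. For real symmetric matrices this agrees with the paper's definition, since the singular values are then the absolute values of the eigenvalues.

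The key tool is the variational (duality) characterization
\[
\|A\|_* = \max\{\, |\operatorname{tr}(U^{T}A)| : U \text{ orthogonal} \,\},
\]
which I will establish directly from the singular value decomposition. For the lower bound, write $A=P\Sigma Q^{T}$ with $P,Q$ orthogonal and $\Sigma$ diagonal with nonnegative entries $\sigma_i(A)$. Choosing $U=PQ^{T}$ gives $\operatorname{tr}(U^{T}A)=\operatorname{tr}(Q P^{T}P\Sigma Q^{T})=\operatorname{tr}(\Sigma)=\|A\|_*$. For the upper bound, take any orthogonal $U$. Then $\operatorname{tr}(U^{T}A)=\operatorname{tr}(W\Sigma)=\sum_i W_{ii}\sigma_i$, where $W=Q^{T}U^{T}P$ is orthogonal. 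Every entry of an orthogonal matrix has absolute value at most $1$, so $|\operatorname{tr}(U^{T}A)|\le\sum_i\sigma_i=\|A\|_*$. Establishing this characterization cleanly is the only real step; everything afterwards is formal.

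For the triangle inequality, pick an orthogonal $U$ attaining the maximum for $A+B$. Then
\[
\|A+B\|_*=\operatorname{tr}(U^{T}(A+B))=\operatorname{tr}(U^{T}A)+\operatorname{tr}(U^{T}B)\le \|A\|_*+\|B\|_*,
\]
where the last step applies the characterization to $A$ and to $B$ separately.

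For the reverse triangle inequality, apply the triangle inequality to $A=B+(A-B)$ to get $\|A\|_*-\|B\|_*\le\|A-B\|_*$. Swapping the roles of $A$ and $B$, and using $\|B-A\|_*=\|A-B\|_*$ (the singular values of $-M$ equal those of $M$), gives the same bound for $\|B\|_*-\|A\|_*$. Together these yield $\bigl|\|A\|_*-\|B\|_*\bigr|\le\|A-B\|_*$. The final assertion $\|A\|_*\ge\|B\|_*-\|A-B\|_*$ is just a rearrangement of the second of these inequalities.
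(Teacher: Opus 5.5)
Your proof is correct. The paper gives no proof of this lemma; it simply cites Horn and Johnson. There, the triangle inequality for $\|\cdot\|_*$ is taken as the known fact that the nuclear norm is a matrix norm, and the reverse inequality is the standard consequence valid for any norm.

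Your argument is therefore an independent, self-contained route. You derive the trace duality $\|A\|_*=\max_U \operatorname{tr}(U^{\top}A)$, over orthogonal $U$, directly from the singular value decomposition, using only that entries of an orthogonal matrix have modulus at most $1$. Subadditivity then follows because a maximum of linear functionals is subadditive. The reverse inequality and the final rearrangement are handled exactly as for any norm.

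Your route needs nothing beyond the SVD. It also matches the paper's stated intention to include proofs of its auxiliary facts, which it does for Lemmas~\ref{lem2.3} and~\ref{lem2.4} but not for this one. The citation only buys brevity.

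Two small remarks:
\begin{enumerate}
\item The absolute value in your characterization is harmless but unnecessary, since $-U$ is orthogonal whenever $U$ is.
\item Your duality is stated for square real matrices. That covers every use in the paper, but the lemma says ``any two matrices of the same size.'' For rectangular or complex matrices, let $U$ range over matrices of operator norm at most $1$, with conjugate transposes in the complex case. The same entrywise bound $|W_{ii}|\le 1$ then applies verbatim.
\end{enumerate}
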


\begin{lemma}\label{lem2.3}
Let $A=(A_{ij})_{i,j=1}^{r}$ be a real symmetric matrix partitioned into blocks, with diagonal blocks $A_{ii}$ of sizes $m_i\times m_i$. Then
$
\|A\|_{*}\ge \sum_{i=1}^{r}\|A_{ii}\|_{*}.
$
\end{lemma}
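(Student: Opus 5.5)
The plan is to realize the block-diagonal part of $A$ as an average of orthogonal conjugates of $A$, and then use the triangle inequality of Lemma~\ref{lem2.2} together with the unitary invariance of the nuclear norm.

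For each sign vector $\varepsilon=(\varepsilon_1,\ldots,\varepsilon_r)\in\{\pm1\}^r$, set
\[
D_\varepsilon=\varepsilon_1 I_{m_1}\oplus\cdots\oplus\varepsilon_r I_{m_r}.
\]
Each $D_\varepsilon$ is orthogonal and symmetric. Conjugation by it multiplies the $(i,j)$ block of $A$ by $\varepsilon_i\varepsilon_j$. If we average over all $2^r$ sign vectors, the factor $\varepsilon_i\varepsilon_j$ averages to $1$ when $i=j$ and to $0$ when $i\neq j$. Hence
\[
\frac{1}{2^r}\sum_{\varepsilon} D_\varepsilon A D_\varepsilon = A_{11}\oplus\cdots\oplus A_{rr}.
\]

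The next step is to check that $D_\varepsilon A D_\varepsilon$ is orthogonally similar to $A$. This holds because $D_\varepsilon^{-1}=D_\varepsilon$, so its eigenvalues coincide with those of $A$ and $\|D_\varepsilon A D_\varepsilon\|_*=\|A\|_*$. Applying the triangle inequality of Lemma~\ref{lem2.2} repeatedly (it extends to finite sums by induction) and using homogeneity of the norm gives
\[
\|A_{11}\oplus\cdots\oplus A_{rr}\|_*\le \frac{1}{2^r}\sum_\varepsilon\|D_\varepsilon A D_\varepsilon\|_*=\|A\|_*.
\]

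Finally, the spectrum of a block-diagonal symmetric matrix is the union, with multiplicities, of the spectra of its diagonal blocks. So the left-hand side equals $\sum_i\|A_{ii}\|_*$, which completes the argument.

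The only step that needs care is the averaging identity. One could instead use only $r$ conjugations with $r$-th roots of unity, but those are unitary rather than real; the $\pm1$ sign averaging keeps everything real orthogonal, so I will use that. An alternative route is Ky Fan's variational characterization $\|A\|_*=\max_U|\operatorname{tr}(UA)|$ with $U=\bigoplus U_i$, where $U_i$ is a sign-adjusted eigenprojection for $A_{ii}$. However, the averaging proof is more self-contained given Lemma~\ref{lem2.2}, so that is the route I will take.
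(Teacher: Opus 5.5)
Your proof is correct and matches the paper's argument essentially verbatim. The paper likewise averages the conjugates $D_\varepsilon A D_\varepsilon$ over all $\varepsilon\in\{\pm1\}^r$ to obtain $\operatorname{diag}(A_{11},\ldots,A_{rr})$, then concludes using the triangle inequality of Lemma~\ref{lem2.2} and the orthogonal invariance of the nuclear norm.
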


\begin{proof}
For each $\varepsilon=(\varepsilon_1,\ldots,\varepsilon_r)\in\{\pm1\}^r$, define $D_\varepsilon=\operatorname{diag}(\varepsilon_1 I_{m_1},\ldots,\varepsilon_r I_{m_r}),$
where $I_{m_i}$ is the identity matrix of size $m_i$. Each $D_\varepsilon$ is orthogonal. Moreover,
$\frac{1}{2^r}\sum_{\varepsilon\in\{\pm1\}^r}D_\varepsilon AD_\varepsilon=\operatorname{diag}(A_{11},\ldots,A_{rr}),$ since the diagonal blocks are preserved while every off-diagonal block is cancelled in the average. By Lemma \ref{lem2.2}, we obtain
\[
\sum_{i=1}^r\|A_{ii}\|_*
=
\|\operatorname{diag}(A_{11},\ldots,A_{rr})\|_*
=
\left\|
\frac{1}{2^r}\sum_{\varepsilon\in\{\pm1\}^r}D_\varepsilon AD_\varepsilon
\right\|_*
\le
\frac{1}{2^r}\sum_{\varepsilon\in\{\pm1\}^r}\|D_\varepsilon AD_\varepsilon\|_*
=
\|A\|_*.
\]
\end{proof}

\begin{lemma}\label{lem2.4}
Let $M = \begin{pmatrix} B & X \\ X^\top & C \end{pmatrix}$ be a real symmetric block matrix, where $B$ is an $m \times m$ matrix, $C$ is a $k \times k$ matrix, and $X$ is an $m \times k$ matrix. Then
$
\|M\|_* \ge \|B\|_* + \|C\|_*.
$
Moreover, the inequality is strict if $B$ has a positive eigenvalue $\lambda > 0$ with unit eigenvector $u \in \mathbb{R}^m$ and $C$ has a negative eigenvalue $\mu < 0$ with unit eigenvector $v \in \mathbb{R}^k$ such that $u^\top X v \neq 0$. The same conclusion holds when $\lambda<0<\mu$.
\end{lemma}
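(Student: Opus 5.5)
The first inequality is the case $r=2$ of Lemma~\ref{lem2.3}, so nothing new is needed there. For strictness I will use the variational description of the nuclear norm of a symmetric matrix: $\|M\|_* = \max\{\operatorname{tr}(MQ) : Q \text{ symmetric}, \ \|Q\|_{\mathrm{op}}\le 1\}$. The maximum is attained at $Q=\operatorname{sign}(M)$, and by von Neumann's trace inequality $\operatorname{tr}(MQ)\le \sum_i |\lambda_i(M)|$ for every such $Q$. The plan is to write down a test matrix $Q$ that recovers $\|B\|_*+\|C\|_*$ exactly, and then perturb it in the off-diagonal direction so that the trace strictly increases.

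\textbf{Step 1.} Set $Q_0=\operatorname{sign}(B)\oplus \operatorname{sign}(C)$, where the sign function is applied through the spectral decomposition and $\operatorname{sign}(0)=0$. Since the off-diagonal blocks of $Q_0$ vanish, $\operatorname{tr}(MQ_0)=\operatorname{tr}(B\operatorname{sign}(B))+\operatorname{tr}(C\operatorname{sign}(C))=\|B\|_*+\|C\|_*$. This gives the weak inequality again.

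\textbf{Step 2.} Assume $\lambda>0>\mu$, with $Bu=\lambda u$, $Cv=\mu v$ and $u^\top X v\ne 0$. Let $\hat u=(u,0)$ and $\hat v=(0,v)$; these are orthonormal in $\mathbb{R}^{m+k}$. Choose $\operatorname{sign}(B)$ so that $u$ is one of its eigenvectors with eigenvalue $+1$, and $\operatorname{sign}(C)$ so that $v$ is one with eigenvalue $-1$. Then $Q_0$ acts on $\operatorname{span}\{\hat u,\hat v\}$ as $\operatorname{diag}(1,-1)$, and this span is $Q_0$-invariant.

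Replace that $2\times 2$ block by the rotation-reflection
\[
R_\theta=\begin{pmatrix}\cos\theta & \sin\theta\\ \sin\theta & -\cos\theta\end{pmatrix},
\]
and leave $Q_0$ unchanged on the orthogonal complement. The resulting $Q_\theta$ is symmetric with eigenvalues in $[-1,1]$, so it is admissible. Compared with $Q_0$, the trace changes by
\[
\operatorname{tr}(MQ_\theta)-\operatorname{tr}(MQ_0)=(\cos\theta-1)(\lambda-\mu)+2\sin\theta\, u^\top X v .
\]
To see this, note that $\hat u^\top M\hat u=\lambda$, $\hat v^\top M\hat v=\mu$ and $\hat u^\top M\hat v=u^\top Xv$. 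For small $\theta$ whose sign matches that of $u^\top Xv$, the right-hand side is $2|\theta|\,|u^\top Xv|+O(\theta^2)>0$. Therefore $\|M\|_*\ge \operatorname{tr}(MQ_\theta)>\|B\|_*+\|C\|_*$. The case $\lambda<0<\mu$ is the same argument with the roles of $\pm1$ in $R_\theta$ exchanged.

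\textbf{Main obstacle.} The delicate point is justifying the variational characterization without heavy machinery. I would prove the bound $\operatorname{tr}(MQ)\le\|M\|_*$ directly. Write $M=\sum_i\lambda_i w_iw_i^\top$; then $\operatorname{tr}(MQ)=\sum_i\lambda_i\, w_i^\top Qw_i$, and $|w_i^\top Qw_i|\le 1$ for each $i$ because $\|Q\|_{\mathrm{op}}\le 1$. This is elementary and avoids any appeal to von Neumann's inequality in general.

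A second point to check is that $\operatorname{sign}(B)$ can be chosen with $u$ as an eigenvector. If $\lambda$ is a repeated eigenvalue, any orthonormal eigenbasis of its eigenspace can be taken to contain $u$, and $\operatorname{sign}(B)$ acts as $+1$ on that whole eigenspace anyway, so the choice causes no difficulty.
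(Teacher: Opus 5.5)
Your proof is correct, but it takes a different route from the paper. The weak inequality is handled identically, via the $r=2$ case of Lemma~\ref{lem2.3}. For strictness, the paper works by compression. It conjugates $M$ by $U_B\oplus U_C$, where these are eigenbases of $B$ and $C$ containing $u$ and $v$. It then permutes coordinates so that $\begin{pmatrix}\lambda&\gamma\\ \gamma&\mu\end{pmatrix}$, with $\gamma=u^\top Xv$, becomes a diagonal block and every other coordinate is a $1\times 1$ block. Applying Lemma~\ref{lem2.3} to this finer partition and computing that the $2\times 2$ block has nuclear norm $\sqrt{(\lambda-\mu)^2+4\gamma^2}>|\lambda|+|\mu|$ finishes the argument.

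You argue by duality instead. You use the easy half of $\|M\|_*=\max\{\operatorname{tr}(MQ)\}$ over symmetric contractions, and take $\operatorname{sign}(B)\oplus\operatorname{sign}(C)$ as a certificate attaining $\|B\|_*+\|C\|_*$. You then show it is not optimal by a first-order rotation in $\operatorname{span}\{\hat u,\hat v\}$. Your trace increment $(\cos\theta-1)(\lambda-\mu)+2\gamma\sin\theta$ is right. So is admissibility: $Q_\theta$ is a reflection on that span and equals $Q_0$ on the invariant complement.

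The two arguments are essentially dual to each other. Maximizing your increment over $\theta$ gives $\sqrt{(\lambda-\mu)^2+4\gamma^2}-(\lambda-\mu)$, exactly the paper's quantitative gain; restricting to small $\theta$, as you do, yields only strictness.

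What each buys: your version is self-contained and avoids the re-blocking and permutation step. It also reproves Lemma~\ref{lem2.3} for any number of blocks, by taking $Q_0=\bigoplus_i\operatorname{sign}(A_{ii})$. The paper's version stays within the pinching lemma it has already established and produces the explicit lower bound $\|M\|_*\ge\|B\|_*+\|C\|_*+\sqrt{(\lambda-\mu)^2+4\gamma^2}-|\lambda|-|\mu|$ directly, without any optimization.
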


\begin{proof}
The non-strict inequality follows immediately from Lemma \ref{lem2.3} by applying it to the two diagonal blocks $B$ and $C$.

We now prove the strict assertion. Extend $u$ to an orthonormal basis of $\mathbb{R}^m$ consisting of eigenvectors of $B$, with $u$ as the first vector, and let $U_B$ be the orthogonal matrix formed by these basis vectors. Similarly, extend $v$ to an orthonormal basis of $\mathbb{R}^k$ consisting of eigenvectors of $C$, with $v$ as the first vector, and let $U_C$ be the corresponding orthogonal matrix. Set $U = \begin{pmatrix} U_B & 0 \\ 0 & U_C \end{pmatrix}$, which is orthogonal.
Then $\|U^\top M U\|_* = \|M\|_*$, and
\[
U^\top M U =
\begin{pmatrix}
U_B^\top B U_B & U_B^\top X U_C \\
U_C^\top X^\top U_B & U_C^\top C U_C
\end{pmatrix}.
\]
Since $U_B$ diagonalizes $B$, we have $U_B^\top B U_B = \operatorname{diag}(\lambda, \lambda', \dots)$, where $\lambda$ is the chosen positive eigenvalue and $\lambda',\dots$ are the remaining eigenvalues of $B$. Similarly, $U_C^\top C U_C = \operatorname{diag}(\mu, \mu', \dots)$, where $\mu$ is the chosen negative eigenvalue and $\mu',\dots$ are the remaining eigenvalues of $C$. The $(1,1)$-entry of the off-diagonal block $U_B^\top X U_C$ is $\gamma := u^\top X v \neq 0$. Let $P$ be a permutation matrix that places the first coordinate of the $C$-block immediately after the first coordinate of the $B$-block. Since both $U$ and $P$ are orthogonal, $\|P^\top U^\top MUP\|_* = \|M\|_*.$ Partition $P^\top U^\top MUP$ so that one diagonal block is
$
\begin{pmatrix}
\lambda & \gamma \\
\gamma & \mu
\end{pmatrix},
$
and each of the remaining coordinates forms a separate $1\times 1$ diagonal block. By Lemma \ref{lem2.3}, we have
\[
\|M\|_* = \|P^\top U^\top MUP\|_* \ge
\left\| \begin{pmatrix} \lambda & \gamma \\ \gamma & \mu \end{pmatrix} \right\|_*
+ \sum |\lambda'| + \sum |\mu'|,
\]
where the sums run over all remaining eigenvalues of $B$ and $C$, respectively. Since $\sum |\lambda'| = \|B\|_* - |\lambda|$ and $\sum |\mu'| = \|C\|_* - |\mu|$, we obtain
\[
\|M\|_* \ge \left\| \begin{pmatrix} \lambda & \gamma \\ \gamma & \mu \end{pmatrix} \right\|_*
+ \|B\|_* + \|C\|_* - |\lambda| - |\mu|.
\]
Because $\lambda > 0$, $\mu < 0$, and $\gamma \neq 0$, the $2 \times 2$ matrix has determinant $\lambda\mu - \gamma^2 < 0$, so its two eigenvalues have opposite signs. Its nuclear norm is therefore
\[
\left\| \begin{pmatrix} \lambda & \gamma \\ \gamma & \mu \end{pmatrix} \right\|_* = \sqrt{(\lambda - \mu)^2 + 4\gamma^2}.
\]
Since $\gamma \neq 0$, we have $\sqrt{(\lambda - \mu)^2 + 4\gamma^2} > \lambda - \mu = |\lambda| + |\mu|$. Hence
\[
\left\| \begin{pmatrix} \lambda & \gamma \\ \gamma & \mu \end{pmatrix} \right\|_* > |\lambda| + |\mu|.
\]
Substituting this into the inequality gives
\[
\|M\|_* > (|\lambda| + |\mu|) + \|B\|_* + \|C\|_* - |\lambda| - |\mu| = \|B\|_* + \|C\|_*.
\]
The case $\lambda<0<\mu$ is analogous. This completes the proof of the strict inequality.
\end{proof}

Finally, we recall a known formula for the Seidel energy of complete multipartite graphs.

\begin{theorem}[\cite{O2}, Theorem 5]\label{thm2.5}
Let $G = K_{p_1,\ldots,p_r}$ with $r \ge 2$, and let $n = p_1 + \cdots + p_r$. Then $E_S(G) = 2n - 2r - 2\lambda_n(S(G))$, where $\lambda_n(S(G))$ is the smallest Seidel eigenvalue of $G$.
\end{theorem}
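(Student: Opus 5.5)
We take the standard route: split $\mathbb{R}^n$ into the span of the part indicators and its orthogonal complement, and then use Lemma~\ref{lem2.1}. Let $V_1,\dots,V_r$ be the parts, let $\mathbf{1}_{V_i}$ be their indicator vectors, and let $B=\bigoplus_{i} J_{p_i}$ be the block-diagonal matrix of all-ones blocks. Then $S(G)_{uv}=1$ if $u\neq v$ lie in the same part, $-1$ if they lie in different parts, and $0$ on the diagonal, so
\[
S(G)=2B-J-I.
\]
Since $\operatorname{tr}S(G)=0$, Lemma~\ref{lem2.1} gives $E_S(G)=2s_-(S(G))$. It therefore suffices to show that
\[
s_-(S(G))=(n-r)-\lambda_n(S(G)).
\]

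\textbf{Step 1 (the complement of $W$).} Let $W=\operatorname{span}\{\mathbf{1}_{V_1},\dots,\mathbf{1}_{V_r}\}$, which has dimension $r$. Both $B$ and $J$ annihilate $W^\perp$, because a vector in $W^\perp$ sums to zero on each part. Hence $S(G)$ acts as $-I$ on $W^\perp$. This gives the eigenvalue $-1$ with multiplicity $n-r$, and $W$ is invariant under $S(G)$.

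\textbf{Step 2 (at most one negative eigenvalue on $W$).} On $W$, the matrix $2B-I$ has eigenvalues $2p_i-1\ge 1$, with eigenvectors $\mathbf{1}_{V_i}$. Since $J$ is positive semidefinite of rank one, interlacing for a rank-one perturbation gives
\[
\lambda_i(2B-I-J)\ge \lambda_{i+1}(2B-I)\quad\text{for } i\le r-1,
\]
with both matrices restricted to $W$. So the eigenvalues $\lambda_1,\dots,\lambda_{r-1}$ of $S(G)|_W$ are at least $1$. Only the smallest eigenvalue $\mu$ of $S(G)|_W$ can be negative.

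\textbf{Step 3 (the key point: $\mu\le -1$).} This is the step I expect to be the crux. If $\mu$ were in $(-1,0)$ or nonnegative, the formula would fail, since then $\lambda_n=-1\neq\mu$. To rule this out, evaluate the quadratic form of $S(G)+I=2B-J$ on a suitable vector of $W$. Writing $y=\sum_i x_i\mathbf{1}_{V_i}$, we get
\[
y^\top(2B-J)y=2\sum_i p_i^2x_i^2-\Big(\sum_i p_ix_i\Big)^2.
\]
Taking $x_i=1/p_i$ gives $2r-r^2\le 0$ for $r\ge 2$. Hence the smallest eigenvalue of $(2B-J)|_W$ is at most $0$, so $\mu\le -1$.

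\textbf{Conclusion.} By Steps 1 and 3, $\mu$ is the smallest eigenvalue of $S(G)$, so $\mu=\lambda_n(S(G))$. By Step 2, the negative eigenvalues of $S(G)$ are exactly $-1$ with multiplicity $n-r$, together with $\lambda_n$. (When $\mu=-1$, this just adds one more copy of $-1$, and the count is unchanged.) Therefore
\[
s_-(S(G))=(n-r)+|\lambda_n(S(G))|=(n-r)-\lambda_n(S(G)),
\]
and hence $E_S(G)=2n-2r-2\lambda_n(S(G))$.
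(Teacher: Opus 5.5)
Your proof is correct and complete. The paper gives no proof of Theorem~\ref{thm2.5}; it quotes the result from Oboudi~\cite{O2}, so there is no in-paper argument to compare against line by line.

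Your route is nonetheless the natural one within the paper's own framework. Step~1 is the case $H=K_r$ of Lemma~\ref{lem3.1}. In the orthonormal basis $\mathbf{1}_{V_i}/\sqrt{p_i}$ of $W$, the restriction $S(G)|_W$ is exactly the matrix $R$ of Section~4, namely $R=(2D-I)-ww^\top$ with $D=\operatorname{diag}(p_i)$ and $w=(\sqrt{p_i})_i$. Your final count is Lemma~\ref{lem2.1} applied with $\operatorname{tr}S(G)=0$.

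The new ingredient is your inertia analysis of $R$. Interlacing for the rank-one positive semidefinite perturbation leaves at most one negative eigenvalue. The test vector $x_i=1/p_i$ then forces that eigenvalue to be at most $-1$, and strictly below $-1$ when $r\ge 3$.

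Besides making the paper self-contained at this point, this gives a small simplification elsewhere. Since $\lambda_{\min}(R)\le -1$ always holds, the quantity $\min\{-1,\lambda_{\min}(R)\}$ in Lemma~\ref{lem4.3} and Theorem~\ref{thm4.4} is just $\lambda_{\min}(R)$, so $\theta=2-\lambda_{\min}(R)$.
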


\section{A structural criterion for independent-set blow-up graphs}
In this section, we establish a sufficient structural criterion under which the Seidel energy of an independent-set blow-up graph strictly increases after an edge is deleted.

Let $G=H[n_1,\ldots,n_p]$ be an independent-set blow-up graph as defined in Section $1$, with blow-up parts $X_1,\ldots,X_p$, where $|X_i|=n_i$ for each $i\in\{1,\ldots,p\}$. For $i \neq j$, define $\sigma_{ij} = -1$ if $ij \in E(H)$, and $\sigma_{ij} = 1$ otherwise.

We first present a useful orthogonal reduction for the Seidel matrix of an independent-set blow-up graph.
\begin{lemma}\label{lem3.1}
Let $G = H[n_1, \ldots, n_p]$ and set $n = n_1 + \cdots + n_p$. Then $S(G) \sim -I_{n-p} \oplus R_H$, where $R_H = (r_{ij})$ is the $p \times p$ real symmetric matrix defined by $r_{ii} = n_i - 1$ and $r_{ij} = \sigma_{ij} \sqrt{n_i n_j}$ for $i \neq j$. Consequently, $E_S(G) = n - p + \|R_H\|_*$.
\end{lemma}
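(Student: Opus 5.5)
We will write $S(G)$ in block form according to the partition $V(G)=X_1\cup\cdots\cup X_p$, and then change to a basis adapted to this partition.

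For $u,v$ in the same part $X_i$ we have $u\neq v$ nonadjacent, so $S(G)_{uv}=1$, while $S(G)_{uu}=0$. Hence the $(i,i)$ diagonal block is $J_{n_i}-I_{n_i}$. For $u\in X_i$ and $v\in X_j$ with $i\neq j$, the entry $S(G)_{uv}$ equals $\sigma_{ij}$, so the $(i,j)$ block is $\sigma_{ij}J_{n_i\times n_j}$. Therefore
\[
S(G)=-I_n+\Bigl(\sigma_{ij}J_{n_i\times n_j}\Bigr)_{i,j=1}^{p},
\]
where we set $\sigma_{ii}=1$.

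Next, for each $i$ choose an orthogonal matrix $Q_i$ of order $n_i$ whose first column is $\mathbf{1}_{n_i}/\sqrt{n_i}$. Let $Q=Q_1\oplus\cdots\oplus Q_p$, which is orthogonal. Then $Q_i^\top J_{n_i\times n_j}Q_j$ has only one nonzero entry: its $(1,1)$ entry equals $\mathbf 1^\top\mathbf 1/\sqrt{n_in_j}\cdot n_in_j/(n_in_j)\cdot\ldots$, which simplifies to $\sqrt{n_in_j}$. This holds because $J_{n_i\times n_j}=\mathbf 1\mathbf 1^\top$ and every other column of $Q_i$ is orthogonal to $\mathbf 1$.

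Consequently, $Q^\top S(G)Q=-I_n+N$, where $N$ is supported only on the $p$ coordinates that are first in their blocks, with entries $\sigma_{ij}\sqrt{n_in_j}$ (and $n_i$ on the diagonal). A permutation similarity moves these $p$ coordinates to the end. This gives
\[
S(G)\sim -I_{n-p}\oplus\bigl(-I_p+(\sigma_{ij}\sqrt{n_in_j})\bigr),
\]
and the second summand is exactly $R_H$, since its diagonal entries are $n_i-1$ and its off-diagonal entries are $\sigma_{ij}\sqrt{n_in_j}$.

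The energy formula follows because the nuclear norm is invariant under orthogonal similarity and additive over direct sums. Thus
\[
E_S(G)=\|S(G)\|_*=\|-I_{n-p}\|_*+\|R_H\|_*=n-p+\|R_H\|_*.
\]

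The only mildly delicate step is the bookkeeping in the block transformation, namely checking that the off-diagonal blocks $Q_i^\top J Q_j$ collapse to a single entry $\sqrt{n_in_j}$. This follows from the rank-one factorization $J_{n_i\times n_j}=\mathbf 1_{n_i}\mathbf 1_{n_j}^\top$.
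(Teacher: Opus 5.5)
Your proof is correct and is essentially the paper's argument: your block-diagonal $Q_1\oplus\cdots\oplus Q_p$, followed by the permutation that moves the first column of each block to the end, is exactly the paper's orthogonal matrix $Q=(F\ G_0)$. Here $F=\operatorname{diag}(F_1,\ldots,F_p)$ collects the columns orthogonal to the all-ones vectors and $G_0$ consists of the normalized indicator vectors. The displayed chain for the $(1,1)$ entry is garbled and should simply read $Q_i^\top J_{n_i\times n_j}Q_j=(Q_i^\top\mathbf{1}_{n_i})(Q_j^\top\mathbf{1}_{n_j})^\top=\sqrt{n_in_j}\,e_1e_1^\top$, which also gives the diagonal value $n_i$ when $i=j$.
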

\begin{proof}
Order the vertices of $G$ so that the vertices of $X_1$ come first, then those of $X_2$, and so on. Under this ordering, every vector in $\mathbb{R}^n$ can be written as $(x_1, \dots, x_p)^\top$ with $x_i \in \mathbb{R}^{n_i}$ corresponding to the vertices of $X_i$. We first construct an orthogonal matrix $Q$. For each $i$, let $\mathbf{1}_i \in \mathbb{R}^{n_i}$ be the all-ones vector, and let $F_i$ be an $n_i \times (n_i - 1)$ matrix whose columns form an orthonormal basis of $\{ x \in \mathbb{R}^{n_i} : \mathbf{1}_i^\top x = 0 \}$. Thus $F_i^\top F_i = I_{n_i-1}$, $F_i^\top \mathbf{1}_i = 0$. If $n_i = 1$, then $F_i$ is understood to be an empty matrix. Let $F = \operatorname{diag}(F_1, \dots, F_p)$. Then $F$ has $n - p$ columns and $F^\top F = I_{n-p}$. For $i = 1, \ldots, p$, define $g_i = \frac{1}{\sqrt{n_i}} \mathbf{1}_{X_i} \in \mathbb{R}^n$, where $\mathbf{1}_{X_i}$ is the indicator vector of $X_i$. Put $G_0 = (g_1, \ldots, g_p)$. Then $G_0^\top G_0 = I_p$, $F^\top G_0 = 0$. Hence $Q = (F \ G_0)$ is an orthogonal matrix.

We now compute $Q^\top S(G)Q$. Let $f$ be a column of $F$ supported on $X_i$. Since the sum of the entries of $f$ on $X_i$ is zero, we have $S(G)f = -f$. Therefore $S(G)F = -F$. It follows that $F^\top S(G)F = -I_{n-p}$ and $F^\top S(G)G_0 = 0$. It remains to compute $G_0^\top S(G) G_0$. For $i = j$,
$$
g_i^\top S(G) g_i = \frac{1}{n_i} \mathbf{1}_{X_i}^\top S(G) \mathbf{1}_{X_i} = \frac{1}{n_i} \mathbf{1}_i^\top (J_{n_i} - I_{n_i}) \mathbf{1}_i = n_i - 1.
$$
For $i \neq j$, the Seidel block between $X_i$ and $X_j$ is $\sigma_{ij} J_{n_i \times n_j}$. Thus
$$
g_i^\top S(G) g_j = \frac{1}{\sqrt{n_i n_j}} \mathbf{1}_{i}^\top \sigma_{ij} J_{n_i \times n_j} \mathbf{1}_{j} = \sigma_{ij} \sqrt{n_i n_j}.
$$
Hence $G_0^\top S(G) G_0 = R_H$. Therefore
$
Q^\top S(G) Q = -I_{n-p} \oplus R_H.
$
Taking nuclear norms gives
$
E_S(G) = \|S(G)\|_* = \|Q^\top S(G) Q\|_* = n-p + \|R_H\|_*.
$
\end{proof}

We next reduce the edge-deletion problem to a matrix problem of order $p+2$.
\begin{lemma}\label{lem3.2}
Let $G = H[n_1, \ldots, n_p]$ and let $e = uv \in E(G)$ with $u \in X_a, v \in X_b$. Suppose $n_a, n_b \geq 2$. Define $\alpha = 1 - \frac{1}{n_a}, \beta = 1 - \frac{1}{n_b}$, and $s = \frac{\varepsilon_a}{\sqrt{n_a}}, t = \frac{\varepsilon_b}{\sqrt{n_b}}$, where $\varepsilon_i$ denotes the $i$-th standard basis vector of $\mathbb{R}^p$. Then $E_S(G - e) - E_S(G) = \|M\|_* - \|M_0\|_*$, where
$$M_0 = \begin{pmatrix} -1 & 0 & 0 \\ 0 & -1 & 0 \\ 0 & 0 & R_H \end{pmatrix} \quad \text{and} \quad M = \begin{pmatrix}
-1 & 2\sqrt{\alpha\beta} & 2\sqrt{\alpha}t^\top \\
2\sqrt{\alpha\beta} & -1 & 2\sqrt{\beta}s^\top \\
2\sqrt{\alpha}t & 2\sqrt{\beta}s & R_H + 2(st^\top + ts^\top)
\end{pmatrix}.$$
Here $M_0$ and $M$ are $(p+2)\times(p+2)$ matrices.
\end{lemma}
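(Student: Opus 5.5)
We write $S(G-e)=S(G)+2E_{uv}$, where $E_{uv}=e_ue_v^\top+e_ve_u^\top$. Deleting $uv$ flips the $(u,v)$ and $(v,u)$ entries from $-1$ to $+1$, so this is the correct rank-two update. The plan is to conjugate by an orthogonal matrix adapted to the blow-up structure, chosen so that the perturbation lives in a small invariant subspace of dimension $p+2$.

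\textbf{Choice of basis.} Start from the matrix $Q=(F\ G_0)$ of Lemma~\ref{lem3.1}, but choose the bases $F_a$ and $F_b$ specially. Put
$$
w_a=\frac{1}{\sqrt{\alpha}}\Big(e_u-\frac{1}{n_a}\mathbf{1}_{X_a}\Big),
$$
which is a unit vector since $\|e_u-\frac1{n_a}\mathbf 1_{X_a}\|^2=1-\frac1{n_a}=\alpha$. It is supported on $X_a$ and orthogonal to $\mathbf 1_{X_a}$, so it can be taken as the first column of $F_a$; this needs $n_a\ge2$. Define $w_b$ similarly on $X_b$ using $v$. Then
$$
e_u=\sqrt{\alpha}\,w_a+\tfrac{1}{\sqrt{n_a}}\,g_a,\qquad e_v=\sqrt{\beta}\,w_b+\tfrac{1}{\sqrt{n_b}}\,g_b .
$$
All other columns of $F$ are orthogonal to $e_u$ and $e_v$.

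\textbf{Reduction.} In the basis $Q'=(w_a,w_b,\text{other }F\text{ columns},G_0)$, the matrix $S(G)$ becomes $\operatorname{diag}(-1,-1,-I_{n-p-2},R_H)$ by Lemma~\ref{lem3.1}. Write the coordinate vectors of $e_u$ and $e_v$ in the reduced coordinates $(w_a,w_b,G_0)$ as
$$
x=(\sqrt\alpha,\,0,\,s),\qquad y=(0,\,\sqrt\beta,\,t),
$$
where $s=\varepsilon_a/\sqrt{n_a}$ and $t=\varepsilon_b/\sqrt{n_b}$. The term $2(xy^\top+yx^\top)$ then gives the following entries:
\begin{itemize}
\item the $(w_a,w_b)$ entry is $2\sqrt{\alpha\beta}$;
\item the $(w_a,G_0)$ row is $2\sqrt\alpha\, t^\top$;
\item the $(w_b,G_0)$ row is $2\sqrt\beta\, s^\top$;
\item the $G_0$ block receives $2(st^\top+ts^\top)$;
\item the diagonal entries at $w_a,w_b$ are unchanged, because $x$ has no $w_b$ component and $y$ has no $w_a$ component.
\end{itemize}
Hence $Q'^\top S(G-e)Q'=-I_{n-p-2}\oplus M$ and $Q'^\top S(G)Q'=-I_{n-p-2}\oplus M_0$. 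Taking nuclear norms, the common $n-p-2$ term cancels, and we obtain $E_S(G-e)-E_S(G)=\|M\|_*-\|M_0\|_*$.

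\textbf{Main obstacle.} There are two points to check carefully. First, $a\ne b$, which holds because $uv$ is an edge and each blow-up part is independent. Second, the mixed terms come out correctly: for example, the $(w_a,G_0)$ row is $2\sqrt\alpha\,t^\top$ and not anything involving $s$. This follows from $x_{w_a}y_{G_0}^\top+y_{w_a}x_{G_0}^\top=\sqrt\alpha\, t^\top+0$.
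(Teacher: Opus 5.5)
Your proof is correct and is essentially the paper's argument. The paper starts from an arbitrary $F$ and then rotates within the $(-1)$-eigenspace block so that $c/\sqrt{\alpha}$ and $d/\sqrt{\beta}$ become basis vectors; your $w_a=(e_u-\frac{1}{n_a}\mathbf{1}_{X_a})/\sqrt{\alpha}$ is exactly $Fc/\sqrt{\alpha}$, so you simply build that rotated basis in one step. (With your column order the result equals $-I_{n-p-2}\oplus M$ only up to a coordinate permutation, which does not affect nuclear norms.)
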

\begin{proof}
By Lemma \ref{lem3.1}, there is an orthogonal matrix $Q$ such that $Q^\top S(G)Q = -I_{n-p} \oplus R_H$. Deleting the edge $uv$ changes the two Seidel matrix entries corresponding to $(u,v)$ and $(v,u)$ from $-1$ to $1$. Hence
$$
S(G-e) = S(G) + 2(e_u e_v^\top + e_v e_u^\top),
$$
where $e_u, e_v \in \mathbb{R}^n$ are the standard basis vectors corresponding to vertices $u$ and $v$, respectively. Let $\xi = Q^\top e_u$, $\eta = Q^\top e_v$. Write
$$
\xi = \begin{pmatrix} c \\ s \end{pmatrix}, \qquad \eta = \begin{pmatrix} d \\ t \end{pmatrix},
$$
where $c, d \in \mathbb{R}^{n-p}$ and $s, t \in \mathbb{R}^p$. Since $u \in X_a$ and $v \in X_b$, we have
$$
s = G_0^\top e_u = \frac{1}{\sqrt{n_a}} \varepsilon_a, \qquad t = G_0^\top e_v = \frac{1}{\sqrt{n_b}} \varepsilon_b.
$$
Because $Q$ is orthogonal, $\|\xi\| = \|Q^\top e_u\| = \|e_u\| = 1$, and similarly $\|\eta\| = 1$. Thus
$$
\|c\|^2 = 1 - \frac{1}{n_a} = \alpha, \qquad \|d\|^2 = 1 - \frac{1}{n_b} = \beta.
$$
Since $c = F^\top e_u$ and $d = F^\top e_v$ with $u \in X_a$, $v \in X_b$ and $a \neq b$, we have $c^\top d = 0$.

We now construct an orthogonal matrix $\widetilde{W}$ and apply one more orthogonal similarity transformation to $Q^\top S(G)Q$ and $Q^\top S(G-e)Q$. Since $n_a,n_b\ge 2$, we have $\alpha,\beta>0$, so $c$ and $d$ are nonzero. Moreover, $n-p\ge 2$ (as $n_a,n_b\ge 2$ and $a\ne b$). Hence we may choose an orthonormal basis $u_1,\ldots,u_{n-p}$ of $\mathbb{R}^{n-p}$ with
\[
u_{n-p-1}=\frac{c}{\sqrt{\alpha}},\qquad u_{n-p}=\frac{d}{\sqrt{\beta}},
\]
and let $W=(u_1,\ldots,u_{n-p})$ be the corresponding orthogonal matrix. Set
$$
\widetilde{W} = \begin{pmatrix} W & 0 \\ 0 & I_p \end{pmatrix}.
$$
Then $\widetilde{W}$ is orthogonal. Since $Q^\top S(G)Q = \begin{pmatrix} -I_{n-p} & 0 \\ 0 & R_H \end{pmatrix}$, we have
$$
(Q\widetilde{W})^\top S(G)(Q\widetilde{W}) = \widetilde{W}^\top \begin{pmatrix} -I_{n-p} & 0 \\ 0 & R_H \end{pmatrix} \widetilde{W} = \begin{pmatrix} -I_{n-p} & 0 \\ 0 & R_H \end{pmatrix}.
$$
Recall that $Q^\top S(G-e)Q = Q^\top S(G)Q + 2(\xi\eta^\top + \eta\xi^\top)$. Under the above orthogonal transformation,
$$
\widetilde{W}^\top \xi = \widetilde{W}^\top \begin{pmatrix} c \\ s \end{pmatrix} = \begin{pmatrix} W^\top c \\ s \end{pmatrix} = \bigl(0, \ldots, 0, \sqrt{\alpha}, 0, s^\top\bigr)^\top,
$$
and
$$
\widetilde{W}^\top \eta = \widetilde{W}^\top \begin{pmatrix} d \\ t \end{pmatrix} = \begin{pmatrix} W^\top d \\ t \end{pmatrix} = \bigl(0, \ldots, 0, 0, \sqrt{\beta}, t^\top\bigr)^\top.
$$
Therefore
$$
\widetilde{W}^\top Q^\top S(G-e)Q \widetilde{W} = \widetilde{W}^\top Q^\top S(G)Q \widetilde{W} + 2\bigl((\widetilde{W}^\top\xi)(\widetilde{W}^\top\eta)^\top + (\widetilde{W}^\top\eta)(\widetilde{W}^\top\xi)^\top\bigr).
$$
Since the first $n-p-2$ entries of both $\widetilde{W}^\top\xi$ and $\widetilde{W}^\top\eta$ are zero, the perturbation affects only the last $p+2$ coordinates. Hence
$$
\widetilde{W}^\top Q^\top S(G-e)Q \widetilde{W} = -I_{n-p-2} \oplus M,
$$
where
$$
M = \begin{pmatrix}
-1 & 2\sqrt{\alpha\beta} & 2\sqrt{\alpha}\,t^\top \\
2\sqrt{\alpha\beta} & -1 & 2\sqrt{\beta}\,s^\top \\
2\sqrt{\alpha}\,t & 2\sqrt{\beta}\,s & R_H + 2(st^\top + ts^\top)
\end{pmatrix}.
$$
Thus $E_S(G) = n-p-2 + \|M_0\|_*$ and $E_S(G-e) = n-p-2 + \|M\|_*$.
Consequently, $E_S(G-e) - E_S(G) = \|M\|_* - \|M_0\|_*.$
\end{proof}

We conclude this section with the following structural criterion.
\begin{theorem}\label{thm3.3}
Let $G=H[n_1,\ldots,n_p]$, and let $e=uv\in E(G)$, where
$u\in X_a$, $v\in X_b$, $a\neq b$, and $ab\in E(H)$.
Suppose that one of the following conditions holds:
\begin{enumerate}
    \item[{\em (i)}] $n_a\ge 4$ and $n_b\ge 4$;
    \item[{\em(ii)}] one of $n_a,n_b$ is equal to $3$ and the other is at least $6$;
    \item[{\em(iii)}] one of $n_a,n_b$ is equal to $2$ and the other is at least $15$.
\end{enumerate}
Then $E_S(G-e)>E_S(G)$.
\end{theorem}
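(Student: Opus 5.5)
The plan is to use Lemma~\ref{lem3.2}. It reduces the claim to the strict inequality $\|M\|_*>\|M_0\|_*=2+\|R_H\|_*$. I will bound $\|M\|_*$ from below by splitting $M$ into its leading $2\times 2$ block and its trailing $p\times p$ block, and applying Lemma~\ref{lem2.4}. This gives
\[
\|M\|_*\ \ge\ \left\|\begin{pmatrix}-1&2\sqrt{\alpha\beta}\\ 2\sqrt{\alpha\beta}&-1\end{pmatrix}\right\|_* + \bigl\|R_H+2(st^\top+ts^\top)\bigr\|_* .
\]
The $2\times 2$ block has eigenvalues $-1\pm 2\sqrt{\alpha\beta}$. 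In all three cases $2\sqrt{\alpha\beta}>1$, so its nuclear norm is $4\sqrt{\alpha\beta}$.

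For the second term, note that $2(st^\top+ts^\top)$ is supported on the two entries $(a,b)$ and $(b,a)$, each equal to $2/\sqrt{n_an_b}$. Its eigenvalues are therefore $\pm 2/\sqrt{n_an_b}$ and its nuclear norm is $4/\sqrt{n_an_b}$. Lemma~\ref{lem2.2} then yields $\|R_H+2(st^\top+ts^\top)\|_*\ge \|R_H\|_*-4/\sqrt{n_an_b}$. Combining the two bounds,
\[
\|M\|_*-\|M_0\|_*\ \ge\ 4\sqrt{\alpha\beta}-\frac{4}{\sqrt{n_an_b}}-2
=\frac{2}{\sqrt{n_an_b}}\Bigl(2\sqrt{(n_a-1)(n_b-1)}-2-\sqrt{n_an_b}\Bigr).
\]
So it suffices to show $f(x,y):=2\sqrt{(x-1)(y-1)}-2-\sqrt{xy}\ge 0$ in the stated ranges, with strictness handled separately wherever $f=0$.

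Next I will check that $f$ is nondecreasing in each variable for $x,y\ge 2$. We have $\partial f/\partial y=\sqrt{(x-1)/(y-1)}-\tfrac12\sqrt{x/y}$, which is positive because $(x-1)/(y-1)\ge \tfrac14\cdot x/y$, i.e. $4y(x-1)\ge x(y-1)$, and this holds for $x\ge 2$. By symmetry the same holds in $x$. It is then enough to evaluate $f$ at the corner points:
\begin{itemize}
\item At $(4,4)$: $f=2\cdot 3-2-4=0$.
\item At $(3,6)$: $f=2\sqrt{10}-2-\sqrt{18}\approx 0.08>0$.
\item At $(2,15)$: $f=2\sqrt{14}-2-\sqrt{30}\approx 0.006>0$.
\end{itemize}
Hence cases (ii) and (iii) give strict inequality, and so does case (i) whenever $(n_a,n_b)\ne(4,4)$. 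Strictness in case (i) away from $(4,4)$ requires checking that $f$ actually increases. For example, at $(4,5)$ we get $2\sqrt{12}-2-\sqrt{20}\approx 0.45$, so monotonicity plus one step gives strictness.

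The main obstacle is the boundary case $n_a=n_b=4$, where the estimate gives only $\ge$. Here I will use the strict part of Lemma~\ref{lem2.4}. The block $B$ has the positive eigenvalue $\lambda=-1+2\sqrt{\alpha\beta}=1/2$ with unit eigenvector $u=(1,1)^\top/\sqrt2$. The coupling gives
\[
u^\top X=\sqrt2\,(\sqrt{\alpha}\,t+\sqrt{\beta}\,s)^\top=\sqrt{\tfrac32}\,(\varepsilon_a+\varepsilon_b)^\top .
\]
So I need a negative eigenvalue of $C=R_H+2(st^\top+ts^\top)$ whose eigenvector $v$ satisfies $v_a+v_b\neq0$. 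If every eigenvector of $C$ for a negative eigenvalue had $v_a+v_b=0$, then $\varepsilon_a+\varepsilon_b$ would lie in the nonnegative spectral subspace of $C$, forcing $(\varepsilon_a+\varepsilon_b)^\top C(\varepsilon_a+\varepsilon_b)\ge 0$. But this quadratic form equals
\[
(n_a-1)+(n_b-1)+2\Bigl(-\sqrt{n_an_b}+\tfrac{2}{\sqrt{n_an_b}}\Bigr)=3+3-8+1=-1<0,
\]
a contradiction. Hence such a $v$ exists, the strict inequality holds, and this finishes case (i).
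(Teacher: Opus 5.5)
Your proposal is correct and follows the paper's proof essentially step for step. It uses the same split of $M$ into $B$ and $C=R_H+2(st^\top+ts^\top)$ via Lemma~\ref{lem2.4}, the same bound $\|C\|_*\ge\|R_H\|_*-4/\sqrt{n_an_b}$ from Lemma~\ref{lem2.2}, and the same monotonicity argument with corner checks at $(2,15)$, $(3,6)$ and $(4,5)$. At $(4,4)$ it likewise invokes the strict part of Lemma~\ref{lem2.4}, using that the quadratic form of $C$ along $\varepsilon_a+\varepsilon_b$ (equivalently $s+t$) is negative.

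There is one harmless constant slip: $u^\top X=\sqrt{3/2}\,(s+t)^\top=\tfrac{\sqrt6}{4}(\varepsilon_a+\varepsilon_b)^\top$, not $\sqrt{3/2}\,(\varepsilon_a+\varepsilon_b)^\top$. This does not affect the nonvanishing argument.
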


\begin{proof}
By Lemma \ref{lem3.2}, $E_S(G-e)-E_S(G)=\|M\|_*-\|M_0\|_*$. Put
\[
B=\begin{pmatrix}-1&2\sqrt{\alpha\beta}\\2\sqrt{\alpha\beta}&-1\end{pmatrix},
\qquad
C=R_H+2(st^\top+ts^\top),
\qquad
X=\begin{pmatrix}2\sqrt{\alpha}\,t^\top\\2\sqrt{\beta}\,s^\top\end{pmatrix}.
\]
Then $M=\begin{pmatrix}B&X\\X^\top&C\end{pmatrix}$. By Lemma \ref{lem2.4},
$\|M\|_*\ge \|B\|_*+\|C\|_*$. Since $\|M_0\|_*=2+\|R_H\|_*$, we obtain
\[
E_S(G-e)-E_S(G)\ge (\|B\|_*-2)+(\|C\|_*-\|R_H\|_*).
\]
Under each of {\rm (i)}--{\rm (iii)}, we have
\[
\alpha\beta
=\left(1-\frac1{n_a}\right)\left(1-\frac1{n_b}\right)
\ge\frac7{15}>\frac14.
\]
Indeed, the respective lower bounds in the three cases are
$\frac34\cdot\frac34$, $\frac23\cdot\frac56$, and
$\frac12\cdot\frac{14}{15}$. Hence $2\sqrt{\alpha\beta}>1$. The
eigenvalues of $B$ are $-1+2\sqrt{\alpha\beta}$ and
$-1-2\sqrt{\alpha\beta}$, and thus
\[
\|B\|_*-2=4\sqrt{\alpha\beta}-2.
\]
Moreover, by Lemma \ref{lem2.2},
\[
\|C\|_*-\|R_H\|_*
\ge-\|2(st^\top+ts^\top)\|_*.
\]
Since $s=\frac{1}{\sqrt{n_a}}\varepsilon_a$ and
$t=\frac{1}{\sqrt{n_b}}\varepsilon_b$ are orthogonal,
$2(st^\top+ts^\top)$ has two nonzero eigenvalues
$\pm\frac{2}{\sqrt{n_an_b}}$. Consequently,
\[
E_S(G-e)-E_S(G)
\ge
4\sqrt{\left(1-\frac1{n_a}\right)
\left(1-\frac1{n_b}\right)}
-2-\frac4{\sqrt{n_an_b}}.
\]
Let
\[
f(x,y)
=
4\sqrt{\left(1-\frac1x\right)\left(1-\frac1y\right)}
-2-\frac4{\sqrt{xy}}.
\]
Then $E_S(G-e)-E_S(G)\ge f(n_a,n_b)$. For fixed $y>1$, both
$\sqrt{1-\frac1x}$ and $-\frac1{\sqrt{x}}$ are strictly increasing
in $x>1$. Hence $f(x,y)$ is strictly increasing in $x$. By symmetry,
it is also strictly increasing in $y$.
Suppose first that condition {\rm (iii)} holds. Up to interchanging
$n_a$ and $n_b$, we have $(n_a,n_b)=(2,m)$ for some $m\ge15$. Hence
\[
f(n_a,n_b)\ge f(2,15)
=4\sqrt{\frac12\cdot\frac{14}{15}}-2-\frac4{\sqrt{30}}
=\frac{4(\sqrt{14}-1)}{\sqrt{30}}-2>0.
\]
Thus $E_S(G-e)>E_S(G)$.
Suppose next that condition {\rm (ii)} holds. Up to interchanging
$n_a$ and $n_b$, we have $(n_a,n_b)=(3,m)$ for some $m\ge6$. Hence
\[
f(n_a,n_b)\ge f(3,6)
=4\sqrt{\frac23\cdot\frac56}-2-\frac4{\sqrt{18}}
=\frac{4\sqrt5-2\sqrt2-6}{3}>0.
\]
Thus $E_S(G-e)>E_S(G)$.
Finally, suppose that condition {\rm (i)} holds and
$(n_a,n_b)\neq(4,4)$. Then at least one of $n_a,n_b$ is at least $5$,
and hence
\[
f(n_a,n_b)\ge f(4,5)
=4\sqrt{\frac34\cdot\frac45}-2-\frac4{\sqrt{20}}>0.
\]
Therefore $E_S(G-e)>E_S(G)$ in all these cases.

It remains to consider $n_a=n_b=4$. Here
$\alpha=\beta=\frac34$, $s=\frac12\varepsilon_a$, and
$t=\frac12\varepsilon_b$. Thus
\[
B=\begin{pmatrix}-1&\frac32\\ \frac32&-1\end{pmatrix},
\qquad
X=\begin{pmatrix}\sqrt3\,t^\top\\ \sqrt3\,s^\top\end{pmatrix},
\qquad
C=R_H+2(st^\top+ts^\top).
\]
The eigenvalues of $B$ are $\frac12$ and $-\frac52$, and
$u=\frac1{\sqrt2}(1,1)^\top$ is a unit eigenvector corresponding to
$\frac12$. Since $ab\in E(H)$, we have
$(R_H)_{aa}=(R_H)_{bb}=3$ and
$(R_H)_{ab}=(R_H)_{ba}=-4$. Also,
$2(st^\top+ts^\top)_{ab}=2(st^\top+ts^\top)_{ba}=\frac12$.
Hence $C_{aa}=C_{bb}=3$ and $C_{ab}=C_{ba}=-\frac72$, so
\[
(s+t)^\top C(s+t)
=\frac14(C_{aa}+C_{bb}+C_{ab}+C_{ba})
=-\frac14<0.
\]
Expand $s+t$ in an orthonormal eigenbasis of $C$, say
$s+t=\sum_i\rho_i v_i$ with $Cv_i=\lambda_i(C)v_i$. Since
$\sum_i\lambda_i(C)\rho_i^2<0$, there exists an index $i$ such that
$\lambda_i(C)<0$ and $\rho_i\neq0$. Let $v=v_i$. Then
$(s+t)^\top v = \rho_i \neq0$, and hence
\[
u^\top Xv=\sqrt{\frac32}\,(s+t)^\top v\neq0.
\]
Thus the conditions for the strict part of Lemma \ref{lem2.4} are
satisfied, yielding $\|M\|_*>\|B\|_*+\|C\|_*$. Consequently,
\[
E_S(G-e)-E_S(G)>
(\|B\|_*-2)+(\|C\|_*-\|R_H\|_*).
\]
Since $\|B\|_*-2=1$ and
$\|C\|_*-\|R_H\|_*\ge-\|2(st^\top+ts^\top)\|_*=-1$, we obtain
$E_S(G-e)-E_S(G)>1-1=0$. This completes the proof.
\end{proof}

\section{A reduced-order spectral criterion for complete multipartite graphs}
In this section, we turn to complete multipartite graphs. Let $X_1,\ldots,X_r$ be the partite sets of $K_{p_1,\ldots,p_r}$, where $|X_i|=p_i$ for each $i$. We first record two immediate consequences of Theorem \ref{thm3.3}.

\begin{corollary}\label{coro4.1}
Let $G=K_{p_1,\ldots,p_r}$ with $r\geq3$, and let $e=uv\in E(G)$,
where $u\in X_a$ and $v\in X_b$, with $a\neq b$. Suppose that one of
the following conditions holds:
\begin{enumerate}
    \item[{\em (i)}] $p_a\geq4$ and $p_b\geq4$;
    \item[{\em(ii)}] one of $p_a,p_b$ is equal to $3$ and the other is at
    least $6$;
    \item[{\em(iii)}] one of $p_a,p_b$ is equal to $2$ and the other is at
    least $15$.
\end{enumerate}
Then $E_S(G-e)>E_S(G)$.
\end{corollary}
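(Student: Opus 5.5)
The plan is to deduce Corollary~\ref{coro4.1} directly from Theorem~\ref{thm3.3}, by realizing $K_{p_1,\ldots,p_r}$ as an independent-set blow-up graph. Take $H=K_r$ on the vertex set $\{1,\ldots,r\}$ and set $n_i=p_i$ for each $i$.

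\textbf{Step 1: identify the graphs.} By definition, $K_r[p_1,\ldots,p_r]$ replaces each vertex $i$ by an independent set $X_i$ of size $p_i$. It also joins $X_i$ and $X_j$ completely for every pair $i\neq j$, since every pair $ij$ is an edge of $K_r$. Hence $K_r[p_1,\ldots,p_r]=K_{p_1,\ldots,p_r}$, and the blow-up parts are exactly the partite sets $X_1,\ldots,X_r$.

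\textbf{Step 2: check the hypotheses of Theorem~\ref{thm3.3}.} Take an edge $e=uv$ with $u\in X_a$ and $v\in X_b$. Then $a\neq b$ automatically, because each partite set is independent. Moreover $ab\in E(K_r)$, since $K_r$ is complete. Conditions (i)--(iii) of the corollary, stated for $p_a,p_b$, are literally conditions (i)--(iii) of Theorem~\ref{thm3.3} for $n_a=p_a$ and $n_b=p_b$.

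\textbf{Step 3: conclude.} Theorem~\ref{thm3.3} now gives $E_S(G-e)>E_S(G)$. The hypothesis $r\ge3$ is not actually needed for the argument; it only reflects the setting of Problem~\ref{prob1.1}.

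No step here is a genuine obstacle. The only point requiring care is the observation that the hypothesis $ab\in E(H)$ holds for every pair of distinct parts. This is immediate because $H$ is complete.
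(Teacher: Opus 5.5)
Your proposal is correct and follows the paper's own argument. The paper likewise identifies $K_{p_1,\ldots,p_r}=K_r[p_1,\ldots,p_r]$, with the blow-up parts equal to the partite sets, and applies Theorem~\ref{thm3.3} directly; your remark that the hypothesis $r\ge 3$ plays no role in the argument is also accurate.
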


\begin{proof}
Since $G=K_{p_1,\ldots,p_r}=K_r[p_1,\ldots,p_r]$, the sets $X_1,\ldots,X_r$ are precisely the blow-up parts of $G$, with $|X_i|=p_i$ for each $i$. Thus the result follows directly from Theorem \ref{thm3.3}.
\end{proof}

\begin{corollary}\label{coro4.2}
Let $r\geq4$ and $n\geq4r$. Then for every edge $e\in E(T(n,r))$, $E_S(T(n,r)-e)>E_S(T(n,r))$.
\end{corollary}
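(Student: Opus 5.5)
The plan is to deduce the statement directly from Corollary~\ref{coro4.1}(i). All we need to check is that every partite set of $T(n,r)$ has size at least $4$ when $n\ge 4r$.

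Write $T(n,r)=K_{p_1,\ldots,p_r}$, where each part size $p_i$ is either $\lfloor n/r\rfloor$ or $\lceil n/r\rceil$. Since $n\ge 4r$, we have $n/r\ge 4$, so $\lfloor n/r\rfloor\ge 4$ and hence $p_i\ge 4$ for every $i$. Also $r\ge 4\ge 3$, so the hypothesis $r\ge3$ of Corollary~\ref{coro4.1} holds.

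Now take any edge $e=uv\in E(T(n,r))$. In a complete multipartite graph, adjacent vertices lie in different parts, so $u\in X_a$ and $v\in X_b$ with $a\neq b$. Because $p_a\ge4$ and $p_b\ge4$, condition (i) of Corollary~\ref{coro4.1} is satisfied, and therefore $E_S(T(n,r)-e)>E_S(T(n,r))$.

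There is no substantive obstacle here. The only thing to state explicitly is that $\lfloor n/r\rfloor\ge4$ follows from $n\ge4r$, which also guarantees that no part is too small for the criterion to apply. Equivalently, one could apply Theorem~\ref{thm3.3} with $H=K_r$, where every pair $ab$ is an edge of $H$.
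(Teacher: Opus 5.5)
Your proposal is correct and follows the paper's proof. Both arguments note that $n\ge 4r$ forces $\lfloor n/r\rfloor\ge 4$, so every part of $T(n,r)$ has size at least $4$, and then conclude by Corollary~\ref{coro4.1}(i). Your explicit checks that $r\ge 3$ and that the endpoints of $e$ lie in different parts are harmless additions.
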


\begin{proof}
The Tur\'an graph $T(n,r)$ is a complete $r$-partite graph whose part sizes are either $\left\lfloor \frac{n}{r} \right\rfloor$ or $\left\lceil \frac{n}{r} \right\rceil$. If $n \geq 4r$, then $\left\lfloor \frac{n}{r} \right\rfloor \geq 4$. Thus every part of $T(n,r)$ has size at least 4. The result follows from Corollary \ref{coro4.1} (i).
\end{proof}

Corollary~\ref{coro4.2} gives a complete affirmative answer to Problem \ref{prob1.2}; namely, one may take $n_0=4r$.

We now derive an exact reduced-order spectral criterion for the cases of complete multipartite graphs not covered by Corollary \ref{coro4.1}. The criterion below is valid for every edge of a complete multipartite graph; in particular, it determines the sign of the Seidel energy change in all the remaining cases of Problem \ref{prob1.1}.

Let $G = K_{p_1,\ldots,p_r}$ with $r \geq 3$ and $p_i \geq 2$, and write $n = \sum_{i=1}^r p_i$. Let $X_1, \ldots, X_r$ be the partite sets of $G$, where $|X_i|=p_i$ for each $i$. Take an edge $e = uv \in E(G)$, where $u \in X_a$ and $v \in X_b$ with $a \neq b$. After deleting $e$, we refine the partite partition as
\[
Y_1 = \{u\},\quad Y_2 = X_a \setminus \{u\},\quad
Y_3 = \{v\},\quad Y_4 = X_b \setminus \{v\},
\]
and let $Y_5, \ldots, Y_{r+2}$ be the remaining partite sets $X_i$ for $i \notin \{a,b\}$, in any order. Put $q_\mu = |Y_\mu|$ for $1 \leq \mu \leq r+2$. Then $G-e$ is an independent-set blow-up graph. By Lemma~\ref{lem3.1},
\[
S(G-e) \sim -I_{n-r-2} \oplus B_e,
\]
where $B_e = (b_{\mu\nu})$ is an $(r+2)\times(r+2)$ symmetric matrix with diagonal entries $b_{\mu\mu} = q_\mu - 1$ and off-diagonal entries $b_{\mu\nu} = \tau_{\mu\nu} \sqrt{q_\mu q_\nu}$ for $\mu \neq \nu$. Here $\tau_{\mu\nu} = \tau_{\nu\mu} = 1$ precisely for the unordered pairs
\[
\{Y_1,Y_2\},\quad \{Y_1,Y_3\},\quad \{Y_3,Y_4\},
\]
and for all other distinct pairs we have $\tau_{\mu\nu} = -1$.

For $G = K_{p_1,\ldots,p_r}$, define
\[
R = (r_{ij})_{1 \leq i,j \leq r}, \qquad
r_{ii} = p_i - 1, \qquad
r_{ij} = -\sqrt{p_i p_j} \quad (i \neq j).
\]
Since $G = K_r[p_1,\ldots,p_r]$, Lemma~\ref{lem3.1} gives $S(G) \sim -I_{n-r} \oplus R$.
Hence $\lambda_n(S(G)) = \min\{-1, \lambda_{\min}(R)\}$.

Therefore the following criterion requires only the spectra of the matrices $R$ and $B_e$, which have orders $r$ and $r+2$, respectively, rather than the spectra of the full Seidel matrices $S(G)$ and $S(G-e)$ of order $n$. The criterion is especially useful when $r$ is much smaller than $n$.

\begin{lemma}\label{lem4.3}
For $G = K_{p_1, \dots, p_r}$ with $r \geq 3$ and $p_i \geq 2$, let $e$ join $X_a$ and $X_b$, and let $R$ and $B_e$ be defined as above. Then
\[
E_S(G-e) - E_S(G) = 2\left[ s_{-}(B_e) + \min\{-1, \lambda_{\min}(R)\} - 2 \right].
\]
\end{lemma}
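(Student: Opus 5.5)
We compute both energies separately, each via Lemma~\ref{lem2.1} (nuclear norm equals trace plus twice the negative part), and then subtract.

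\emph{The energy of $G-e$.} By Lemma~\ref{lem3.1} applied to $G-e$ viewed as an independent-set blow-up with parts $Y_1,\ldots,Y_{r+2}$, we have $E_S(G-e)=(n-r-2)+\|B_e\|_*$. Lemma~\ref{lem2.1} gives $\|B_e\|_*=\operatorname{tr}(B_e)+2s_-(B_e)$, and
\[
\operatorname{tr}(B_e)=\sum_{\mu=1}^{r+2}(q_\mu-1)=n-(r+2).
\]
Hence $E_S(G-e)=2(n-r-2)+2s_-(B_e)$.

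\emph{The energy of $G$.} Theorem~\ref{thm2.5} gives $E_S(G)=2n-2r-2\lambda_n(S(G))$. We showed above that $\lambda_n(S(G))=\min\{-1,\lambda_{\min}(R)\}$. As a consistency check, Lemma~\ref{lem2.1} applied to $-I_{n-r}\oplus R$ gives the same value. Indeed, $\operatorname{tr}(R)=n-r$, and $R=\operatorname{diag}(p_i)-I-ww^\top$ with $w=(\sqrt{p_i})$. So $R+I$ is a positive diagonal matrix minus a rank-one matrix, and $R$ has at most one eigenvalue below $-1$. This accounts for the $\min$ in the formula.

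\emph{Subtracting.} The difference is
\[
E_S(G-e)-E_S(G)=2s_-(B_e)-4+2\min\{-1,\lambda_{\min}(R)\}=2\bigl[s_-(B_e)+\min\{-1,\lambda_{\min}(R)\}-2\bigr],
\]
which is the claimed formula.

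\emph{Main obstacle.} The only delicate point is bookkeeping for the sizes of the parts $Y_\mu$. The parts $Y_1$ and $Y_3$ are singletons, so they contribute $q_\mu-1=0$ to the trace. The parts $Y_2$ and $Y_4$ are nonempty because $p_a,p_b\ge2$, which is needed for the blow-up description to be valid. Beyond that, we must confirm that $\tau$ correctly encodes the non-adjacency inside the refined parts. The pairs $\{Y_1,Y_2\}$ and $\{Y_3,Y_4\}$ lie within the original parts $X_a$ and $X_b$, so they are non-adjacent. The pair $\{Y_1,Y_3\}$ is non-adjacent because the edge $e$ was deleted. All other pairs of parts are completely joined. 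Once this is confirmed, Lemma~\ref{lem3.1} applies to $G-e$ as stated.
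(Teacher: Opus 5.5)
Your proof is correct and follows the paper's argument essentially line for line: Lemma~\ref{lem3.1} and Lemma~\ref{lem2.1} with $\operatorname{tr}B_e=n-r-2$ give $E_S(G-e)=2(n-r-2)+2s_-(B_e)$, Theorem~\ref{thm2.5} gives $E_S(G)$, and you subtract. Your side ``consistency check'' is not needed and is under-justified as written, though the main argument does not depend on it: agreement via Lemma~\ref{lem2.1} requires that $R$ have exactly one negative eigenvalue and that it lie below $-1$, not merely ``at most one eigenvalue below $-1$''. Both facts do hold: $\operatorname{diag}(p_i)^{-1/2}(R+I)\operatorname{diag}(p_i)^{-1/2}=I-J_r$, and $R=\operatorname{diag}(p_i-1)-ww^\top$ interlaces, so the $\min$ is in fact always $\lambda_{\min}(R)$.
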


\begin{proof}
By Lemma \ref{lem3.1}, $E_S(G-e) = n - r - 2 + \|B_e\|_*$. Note that
\[
\operatorname{tr} B_e = \sum_{\mu=1}^{r+2} (q_\mu - 1) = \Bigl(\sum_{\mu=1}^{r+2} q_\mu\Bigr) - (r+2) = n - r - 2.
\]
By Lemma \ref{lem2.1}, for a real symmetric matrix $A$, we have $\|A\|_* = \operatorname{tr} A + 2s_-(A)$. Therefore
\[
\|B_e\|_* = n - r - 2 + 2s_-(B_e),
\]
and consequently
\[
E_S(G-e) = (n - r - 2) + \bigl(n - r - 2 + 2s_-(B_e)\bigr) = 2(n - r - 2) + 2s_-(B_e).
\]
By Theorem \ref{thm2.5}, for $G = K_{p_1,\ldots,p_r}$ we have
\[
E_S(G) = 2n - 2r - 2\lambda_n(S(G)),
\]
where $\lambda_n(S(G))= \min\{-1, \lambda_{\min}(R)\}$. Subtracting this from $E_S(G-e)$ gives the desired formula.
\end{proof}

Lemma \ref{lem4.3} immediately yields the following exact reduced-order spectral criterion.
\begin{theorem} \label{thm4.4}
Let $G = K_{p_1, \dots, p_r}$ with $r \geq 3$ and $p_i \geq 2$, let $e$ join $X_a$ and $X_b$, and let $R$ and $B_e$ be defined as above. Set
$\theta = 2 - \min\{-1, \lambda_{\min}(R)\}$. Then
\begin{align*}
E_S(G-e) > E_S(G) &\Longleftrightarrow s_{-}(B_e) > \theta, \\
E_S(G-e) < E_S(G) &\Longleftrightarrow s_{-}(B_e) < \theta, \\
E_S(G-e) = E_S(G) &\Longleftrightarrow s_{-}(B_e) = \theta.
\end{align*}
\end{theorem}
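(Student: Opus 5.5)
This is an immediate consequence of Lemma~\ref{lem4.3}, so the plan is a short rewriting of that identity. With $\theta = 2 - \min\{-1, \lambda_{\min}(R)\}$ we have $\min\{-1,\lambda_{\min}(R)\} - 2 = -\theta$. Lemma~\ref{lem4.3} then reads
\[
E_S(G-e) - E_S(G) = 2\bigl[s_-(B_e) - \theta\bigr].
\]
The quantities $E_S(G-e)-E_S(G)$ and $s_-(B_e)-\theta$ therefore differ by the positive factor $2$, and so they have the same sign. The three equivalences follow by reading off the cases: the difference is positive, negative, or zero exactly when $s_-(B_e)$ is greater than, less than, or equal to $\theta$. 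Together the three cases exhaust all possibilities, by trichotomy of the real numbers.

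The only points to check are that the hypotheses of Lemma~\ref{lem4.3} coincide with those of Theorem~\ref{thm4.4}, namely $r\ge 3$, $p_i\ge 2$, and $e$ joining $X_a$ and $X_b$, and that $R$ and $B_e$ are the matrices built in the same way. Both hold verbatim, so no step here is a genuine obstacle. All the substantive work lies in Lemma~\ref{lem4.3}, which uses Lemma~\ref{lem3.1}, Lemma~\ref{lem2.1} and Theorem~\ref{thm2.5}.

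I would also note in passing that $\theta \ge 3$, since $\min\{-1,\lambda_{\min}(R)\}\le -1$. The criterion thus says that the Seidel energy increases precisely when the negative eigenvalues of the $(r+2)\times(r+2)$ matrix $B_e$ have total absolute value exceeding $\theta$.
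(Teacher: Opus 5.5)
Your proposal is correct and takes the same route as the paper, which also obtains Theorem~\ref{thm4.4} directly from Lemma~\ref{lem4.3}. Rewriting that identity as $E_S(G-e)-E_S(G)=2\bigl[s_-(B_e)-\theta\bigr]$ and reading off the sign is exactly the intended argument, and your side remark that $\theta\ge 3$ is also correct.
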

\begin{proof}
The result follows immediately from Lemma \ref{lem4.3}.
\end{proof}

The following example illustrates the use of Theorem \ref{thm4.4} and shows that, even for a fixed complete multipartite graph, the sign of the Seidel energy change may depend on the partite sets containing the deleted edge.
\begin{example}\label{ex4.5}
Let $G=K_{2,2,4}$ with partite sets $X_1,X_2,X_3$ of sizes $2$, $2$, and $4$, respectively. The matrix $R$ is
\[
R=
\begin{pmatrix}
1&-2&-2\sqrt2\\
-2&1&-2\sqrt2\\
-2\sqrt2&-2\sqrt2&3
\end{pmatrix}.
\]
Its smallest eigenvalue is $\lambda_{\min}(R)=1-2\sqrt5$, and hence $\theta=2-\lambda_{\min}(R)=1+2\sqrt5$. First, let $e_{12}$ join $X_1$ and $X_2$. Then
\[
B_{e_{12}}=
\begin{pmatrix}
0&1&1&-1&-2\\
1&0&-1&-1&-2\\
1&-1&0&1&-2\\
-1&-1&1&0&-2\\
-2&-2&-2&-2&3
\end{pmatrix}.
\]
The negative eigenvalues of $B_{e_{12}}$ are $-3$ and $-\sqrt5$. Thus $s_-(B_{e_{12}})=3+\sqrt5<1+2\sqrt5=\theta$. By Theorem \ref{thm4.4}, $E_S(G-e_{12})<E_S(G)$. Next, let $e_{13}$ join $X_1$ and $X_3$. Then
\[
B_{e_{13}}=
\begin{pmatrix}
0&1&1&-\sqrt3&-\sqrt2\\
1&0&-1&-\sqrt3&-\sqrt2\\
1&-1&0&\sqrt3&-\sqrt2\\
-\sqrt3&-\sqrt3&\sqrt3&2&-\sqrt6\\
-\sqrt2&-\sqrt2&-\sqrt2&-\sqrt6&1
\end{pmatrix}.
\]
Numerically, $s_-(B_{e_{13}})\approx5.591209>1+2\sqrt5=\theta$. By Theorem \ref{thm4.4}, $E_S(G-e_{13})>E_S(G)$. Thus, for the fixed graph $K_{2,2,4}$, deleting an edge joining the two parts of size $2$ decreases the Seidel energy, whereas deleting an edge joining a part of size $2$ to the part of size $4$ increases the Seidel energy.
\end{example}

\begin{remark}
The equality alternative in Theorem \ref{thm4.4} is included for completeness. We leave the existence and classification of equality cases for complete multipartite graphs open here.
\end{remark}

\section{Concluding remarks and open problems}
Sections 3 and 4 established structural and spectral criteria for the change of Seidel energy under edge deletion in independent-set blow-up graphs and complete multipartite graphs. It is natural to ask whether analogous criteria hold for arbitrary graphs.

\begin{problem}\label{prob5.1}
Let $G$ be an arbitrary graph and let $e\in E(G)$. Find effective structural or spectral criteria that determine the sign of $E_S(G-e)-E_S(G)$.
\end{problem}

The Seidel matrix has a natural interpretation in terms of signed graphs. Recall that a signed graph $\Sigma = (\Gamma, \sigma)$ consists of an underlying graph $\Gamma$ and a sign function $\sigma: E(\Gamma) \to \{+1, -1\}$. Its adjacency matrix is defined by $A(\Sigma)_{uv} = \sigma(uv)$ when $uv \in E(\Gamma)$, and by $A(\Sigma)_{uv} = 0$ otherwise. The adjacency energy of $\Sigma$ is $\mathcal{E}(\Sigma) = \|A(\Sigma)\|_*$. For a graph $G$, define a signed complete graph $\Sigma_G$ on $V(G)$ with signs:
\[
\sigma(uv) = \begin{cases}
-1, & uv \in E(G),\\
1, & uv \notin E(G).
\end{cases}
\]
Then $A(\Sigma_G) = S(G)$ and $\mathcal{E}(\Sigma_G) = E_S(G)$. Deleting an edge $uv$ from $G$ corresponds to changing the sign of $uv$ in $\Sigma_G$ from $-1$ to $+1$. Thus, Problem \ref{prob5.1} may be viewed as a sign-flip problem for signed complete graphs. Viewed in this way, Theorem \ref{thm3.3} and Theorem \ref{thm4.4} provide, respectively, structural and exact reduced-order spectral criteria for the sign-flip problem on the associated signed complete graphs, thereby motivating its extension to non-complete signed graphs.

\begin{problem}\label{prob5.2}
Let \(\Sigma\) be a signed graph and let \(uv\) be a negative edge. Let $\Sigma^{+uv}$ be obtained from $\Sigma$ by changing the sign of $uv$ from $-1$ to $+1$. Find effective structural or spectral criteria that determine the sign of $ \mathcal E(\Sigma^{+uv})-\mathcal E(\Sigma). $
\end{problem}


\end{document}